\newtheorem{thm}{Theorem}[section]
\newtheorem{lemma}[thm]{Lemma}
\newtheorem{corollary}[thm]{Corollary}
\theoremstyle{definition}
\newtheorem{example}[thm]{Example}
\theoremstyle{remark}
\numberwithin{equation}{section}
\numberwithin{equation}{section}
\newcommand{\R}{\mathbb R}
\newcommand{\ve}{\varepsilon}
\newcommand{\lam}{\lambda}
\def \R {\mathbb{R}}
\def \div {\mathrm{div}}
\def \dist {\mathrm{dist}}
\def \diam {\mathrm{diam}}
\def \suchthat {\ \big | \ }
\def \Leb {\mathscr{L}^n}
\def \ve {\varepsilon}
\newcommand{\defeq}{\mathrel{\mathop:}=}
\title[$\infty-$eigenvalues]{
Uniform stability of the ball with respect to the first Dirichlet and Neumann $\infty-$eigenvalues}
\begin{document}

\author[J.V. da Silva, J.D. Rossi and A.M. Salort]{Jo\~{a}o Vitor da Silva, Julio D. Rossi and Ariel M. Salort}

\address{Departamento de Matem\'atica, FCEyN - Universidad de Buenos Aires and
\hfill\break \indent IMAS - CONICET
\hfill\break \indent Ciudad Universitaria, Pabell\'on I (1428) Av. Cantilo s/n.
\hfill\break \indent Buenos Aires, Argentina.}

\email{jdasilva@dm.uba.ar, jrossi@dm.uba.ar, asalort@dm.uba.ar}
\urladdr{http://mate.dm.uba.ar/~jrossi, http://mate.dm.uba.ar/~asalort}

\subjclass[2010]{35B27, 35J60, 35J70}

\keywords{$\infty-$eigenvalues estimates, $\infty-$eigenvalue problem, approximation of domains}

\begin{abstract}
In this note we analyze how perturbations of a ball $\mathfrak{B}_r \subset \R^n$ behaves in terms of their first (non-trivial) Neumann and Dirichlet $\infty-$eigenvalues when a volume constraint $\Leb(\Omega) = \Leb(\mathfrak{B}_r)$ is imposed.
Our main result states that $\Omega$ is uniformly close to a ball when it has first Neumann and Dirichlet eigenvalues close to the ones for the ball of the same volume $\mathfrak{B}_r$. In fact, we show that, if
$$
 |\lam_{1,\infty}^D(\Omega) - \lam_{1,\infty}^D(\mathfrak{B}_r)| = \delta_1 \quad \text{and} \quad |\lam_{1,\infty}^N(\Omega) - \lam_{1,\infty}^N(\mathfrak{B}_r)| = \delta_2,
$$
then there are two balls such that
$$\mathfrak{B}_{\frac{r}{\delta_1 r+1}} \subset \Omega \subset \mathfrak{B}_{\frac{r+\delta_2 r}{1-\delta_2 r}}.$$
In addition, we also obtain a result concerning stability of the Dirichlet $\infty-$eigen-functions.
\end{abstract}	
\maketitle


\section{Introduction}\label{Intro}

Let $\Omega \subset \R^n$ be a bounded domain (connected open subset) with smooth boundary, $1<p< \infty$ and $\Delta_p u \defeq \div(|\nabla u|^{p-2}\nabla u)$ (the standard $p$-Laplacian operator). Historically (cf. \cite{Lindq90}), it well-known that the first eigenvalue (referred as \textit{the principal frequency} in physical models) of the $p-$Laplacian  Dirichlet eigenvalue problem
\begin{equation}\label{eq.p}
\left\{
\begin{array}{rclcl}
  -\Delta_p u & = & \lambda_{1, p}^D(\Omega)|u|^{p-2} u & \text{in} & \Omega \\
  u & = & 0 & \text{on} & \partial \Omega
\end{array}
\right.
\end{equation}
can be characterized variationally as the minimizer of the following (normalized) problem:
\begin{equation}\tag{{\bf \text{p-Dirichlet}}}\label{1er.p}
   \displaystyle \lambda_{1, p}^D(\Omega) \defeq \inf_{u \in W^{1,p}_0 (\Omega) \setminus \{0\}}
   \left\{ \displaystyle \int_{\Omega} |\nabla u|^p dx  : \int_{\Omega} |u|^pdx = 1\right\}.
\end{equation}

In the theory of shape optimization and nonlinear eigenvalue problems obtaining (sharp) estimates for the eigenvalues in terms of geometric quantities of the domain (e.g. measure, perimeter, diameter, among others) plays a fundamental role due to several applications of these problems in pure and applied sciences. We recall that the explicit value to \eqref{1er.p} is known only for some specific values of $p$ or for very particular domains $\Omega$. Notice that upper bounds for $\lambda_{1, p}^D(\Omega)$ are usually obtained by selecting particular test functions in \eqref{1er.p}. Nevertheless, lower bounds are a more challenging task. In this direction we have the remarkable \textit{Faber-Krahn inequality}:
\textit{Among all domains of prescribed volume the ball minimizes \eqref{1er.p}}. More precisely,
\begin{equation} \label{chi}
   \lambda_{1, p}^D(\Omega) \geq \lambda_{1, p}^D(\mathfrak{B}),
\end{equation}
where $\mathfrak{B}$ is the $n$-dimensional ball such that $\Leb(\Omega) = \Leb(\mathfrak{B})$
(along this paper $\Leb (\Omega)$ will denote the Lebesgue measure of $\Omega$ that is assumed to be fixed).
Using isoperimetric or isodiametric inequality similar lower bounds for \eqref{1er.p} in terms of the perimeter (resp. diameter) of $\Omega$ are also available
(cf. \cite{Bhat} and \cite[page 224]{Lindq92}, and the references therein).
Recently, stability estimates for certain geometric inequalities were established in \cite{FMP2}, thereby providing an improved version of \eqref{chi} by adding a suitable remainder term, i.e., 
$$
   \lambda_{1, p}^D(\Omega) \geq \lambda_{1, p}^D(\mathfrak{B})\left(1+
\gamma_{p, n} (\mathcal{S}(\Omega))^{2+p}\right),
$$
where $\mathcal{S}(\Omega)$ is the so-called \textit{Fraenkel asymmetry}
of $\Omega$, which is precisely defined as
$$
   \mathcal{S}(\Omega) \defeq  \inf_{x_0 \in \R^n} \left\{ \frac{\Leb
(\Omega \bigtriangleup \mathfrak{B}_{r}(x_0))}{\Leb(\Omega)} \,
:\Leb(\mathfrak{B}_{r}(x_0)) = \Leb(\Omega) \right\},
$$
and $\gamma_{p, n}$ is a constant.
Observe that $\mathcal{S}$ measures the distance of a set $\Omega$
from being a ball. For such quantitative estimates
and further related topics we quote \cite{Bp}, \cite{Cianci}, \cite{Fusco}
and references therein.

Our main goal here is to find stability results for the limit case $p=\infty$.

First, we   introduce what is known for the limit as $p\to \infty$ in the eigenvalue problem for the
$p-$Laplacian.
When one takes the limit as $ p\to \infty$ in the minimization problem \eqref{1er.p}, one obtains
\begin{equation} \tag{{\bf \text{$\infty$-Dirichlet}}}\label{lam.infD}
  \lam_{1,\infty}^D(\Omega) \defeq \lim_{p\to\infty} \sqrt[p]{\lam_{1, p}^D(\Omega)} =\inf_{u \in W^{1, \infty}_0(\Omega)\setminus\{0\}} \|\nabla u\|_{L^{\infty}(\Omega)}>0,
\end{equation}
see \cite{JLM}.
Concerning the limit equation, also in \cite{JLM} it is proved that any family of normalized eigenfunctions $\{u_p\}_{p>1}$ to \eqref{1er.p} converges (up to a  subsequence) locally uniformly to $u_\infty \in W^{1,\infty}_0 (\Omega)$,
a minimizer for \ref{lam.infD} with $\|u_\infty\|_{L^{\infty}(\Omega)}=1$.
Moreover, the pair $(u_\infty, \lam_{1, \infty}^D(\Omega))$ is a nontrivial solution to
\begin{equation}\label{eq.infty.p}
\left\{
\begin{array}{rclcl}
  \min\Big\{-\Delta_\infty v_\infty, |\nabla v_\infty|-\lam_{1,\infty}^D(\Omega)v_\infty\Big\} & = & 0 & \text{in} & \Omega \\
  v_\infty & = & 0 & \text{on} & \partial \Omega.
\end{array}
\right.
\end{equation}
Solutions to \eqref{eq.infty.p} must be understood in the viscosity sense (cf. \cite{CIL} for a survey) and
$ \Delta_\infty u(x) \defeq \nabla u(x)^TD^2u(x) \cdot \nabla u(x) $
is the well-known \textit{$\infty-$Laplace  operator}.
In addition, also in \cite{JLM}, it is given an interesting and useful geometrical characterization for \eqref{lam.infD}:
\begin{equation} \label{lam1}
	\lam_{1,\infty}^D(\Omega) = \Big(\max_{x \in \Omega} \dist(x, \partial \Omega)\Big)^{-1}.
\end{equation}
Such an information means that the ``principal frequency'' for the $\infty$-eigenvalue problem can be detected from the geometry of the domain: it is precisely the reciprocal of radius $\mathfrak{r}_\Omega>0$ of the largest ball inscribed in $\Omega$. For more references concerning the first eigenvalue \eqref{eq.infty.p} we refer to \cite{KH}, \cite{NRSanAS} and \cite{Yu}.

Now, let us turn our attention to Neumann boundary conditions and consider the following eigenvalue problem:
\begin{equation}\label{eq.p.n}
\left\{
\begin{array}{rclcl}
  -\Delta_p u & = & \lambda_{1, p}^N(\Omega)|u|^{p-2} u & \text{in} & \Omega \\
  \displaystyle |\nabla u|^{p-2}\tfrac{\partial u}{\partial \nu} & = & 0 & \text{on} & \partial \Omega.
\end{array}
\right.
\end{equation}
As before, we stress that the first non-zero eigenvalue of \eqref{eq.p.n} can also be characterized variationally as the minimizer of the following normalized problem:
\begin{equation}\tag{{\bf \text{p-Neumann}}}\label{1er.p.n}
   \displaystyle \lambda_{1, p}^N(\Omega) \defeq \inf_{u \in W^{1, p}(\Omega)} \left\{ \displaystyle \int_{\Omega} |\nabla u|^pdx: \int_{\Omega}| u|^p dx  = 1\,\, \text{and}\,\,\int_\Omega |u|^{p-2}udx =0\right\}.
\end{equation}
The celebrated \textit{Payne-Weinberger inequality} provides a lower bound  (on any convex domain $\Omega \subset \R^n$) for the first (non-trivial) Neumann $p-$eigenvalue (cf. \cite{ENT} and \cite{Valt})
\begin{equation} \label{ggg}
  \lambda_{1, p}^N(\Omega) \geq (p-1)\left(\frac{2\pi}{p\, \diam(\Omega)\, \sin(\frac{\pi}{p})}\right)^{p}.
\end{equation}
For a  stability estimate for this problem with $p=2$ we refer to \cite{Bp}.

When $p\to \infty$, the minimization problem \eqref{1er.p.n} becomes
\begin{equation}\tag{{\bf \text{$\infty$-Neumann}}}\label{lam.inf}
  \displaystyle \lam_{1,\infty}^N(\Omega) \defeq \lim_{p\to\infty} \sqrt[p]{\lam_{1, p}^N(\Omega)} = \inf_{ u\in W^{1,\infty}(\Omega)  \atop{\max\limits_{\Omega} u = -\min\limits_{\Omega} u = 1}} \|\nabla u\|_{L^\infty(\Omega)},
\end{equation}
see \cite{EKNT} and \cite{RosSaint}. Concerning the limit equation, also in \cite{EKNT} and  \cite{RosSaint}, it is proved that any family of normalized eigenfunctions $\{u_p\}_{p>1}$ to \eqref{1er.p.n} converges (up to subsequence) locally uniformly to $u_\infty \in W^{1,\infty}_0 (\Omega)$ with $\|u_\infty\|_{L^{\infty}(\Omega)}=1$.
Moreover, the pair $(u_\infty, \lam_{1, \infty}^N(\Omega))$ is a nontrivial solution to
\begin{equation}\label{eq.infty.p.n}
\left\{
\begin{array}{rclcl}
  \min\Big\{-\Delta_\infty v_\infty, |\nabla v_\infty|-\lam_{1,\infty}^N(\Omega)v_\infty\Big\} & = & 0 & \text{in} & \Omega\cap \{v>0\} \\
\max\Big\{-\Delta_\infty v_\infty, -|\nabla v_\infty|-\lam_{1,\infty}^N(\Omega)v_\infty\Big\} & = & 0 & \text{in} & \Omega\cap \{v<0\} \\
 -\Delta_\infty v_\infty & = & 0 & \text{in} & \Omega \cap \{v=0\}\\
 \displaystyle \frac{\partial v_{\infty}}{\partial \nu}& = & 0 & \text{in} & \partial \Omega.
\end{array}
\right.
\end{equation}
In addiction, we have the following geometrical characterization for $\lam_{1,\infty}^N(\Omega)$:
\begin{equation} \label{lam1.n}
	\lam_{1,\infty}^N(\Omega) = \frac{2}{\diam(\Omega)},
\end{equation}
where the intrinsic diameter of $\Omega$ is defined as
$$
	\diam(\Omega) \defeq \max_{\bar\Omega \times \bar\Omega} d_{\Omega}(x,y) = \max_{\partial \Omega \times \partial \Omega} d_{\Omega}(x,y),
$$
being $d_{\Omega}(x,y)$ the geodesic distance given by $ d_{\Omega}(x,y)=\inf_{\gamma} \text{Long}(\gamma)
$,  where the infimum is taken
over all possible Lipschitz curves in $\bar\Omega$ connecting $x$ and $y$.

We remark that in the limit case $p=\infty$, the geometrical characterization \eqref{lam1.n} of \eqref{lam.inf} yields several interesting consequences:
\begin{itemize}
  \item[\checkmark] If $\Leb(\Omega) = \Leb(\mathfrak{B})$, $\mathfrak{B}$ being a ball, then $\lam_{1,\infty}^N(\Omega) \leq \lam_{1,\infty}^N(\mathfrak{B})$, which establishes a \textit{Szeg\"{o}-Weinberger type inequality}: among all domains of prescribed volume the ball maximizes \eqref{lam.inf}.
  \item[\checkmark] $\lam_{1,\infty}^N(\Omega) \leq \lam_{1,\infty}^D(\Omega)$ for any convex $\Omega$ with equality if and only if $\Omega$ is a ball.
  \item[\checkmark] The Payne-Weinberger inequality, \eqref{ggg}, becomes an equality when  $p = \infty$.
\end{itemize}

Taking account the previous historic overview, we arrive to our main result, which establishes the stability of the ball with respect to small perturbations of their first Dirichlet and Neumann $\infty-$eigenvalues. More precisely, if a domain $\Omega\subset \R^n$ has Dirichlet and Neumann $\infty-$eigenvalues close enough to those of the ball
$\mathfrak{B}_r$ of the same Lebesgue measure, then $\Omega$ is uniformly ``almost'' ball-shaped.

\begin{thm}\label{Mainthm} Let $\Omega$ be an open domain satisfying $\Leb(\Omega)=\Leb(\mathfrak{B}_r)$. If for some $\delta_i>0$ ($i=1, 2$) small enough it holds that
$$
 |\lam_{1,\infty}^D(\Omega) - \lam_{1,\infty}^D(\mathfrak{B}_r)| = \delta_1 \quad \text{and} \quad |\lam_{1,\infty}^N(\Omega) - \lam_{1,\infty}^N(\mathfrak{B}_r)| = \delta_2,
$$
then there are two balls such that
$$\mathfrak{B}_{\frac{r}{\delta_1 r+1}} \subset \Omega \subset \mathfrak{B}_{\frac{r+\delta_2 r}{1-\delta_2 r}}.$$ \end{thm}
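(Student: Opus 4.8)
The plan is to exploit the two geometric characterizations already recorded in the excerpt, namely $\lam_{1,\infty}^D(\Omega) = \mathfrak{r}_\Omega^{-1}$ where $\mathfrak{r}_\Omega$ is the inradius of $\Omega$ (equation \eqref{lam1}), and $\lam_{1,\infty}^N(\Omega) = 2/\diam(\Omega)$ where $\diam$ is the intrinsic (geodesic) diameter (equation \eqref{lam1.n}). For the ball $\mathfrak{B}_r$ both quantities are explicit: $\lam_{1,\infty}^D(\mathfrak{B}_r) = 1/r$ and $\lam_{1,\infty}^N(\mathfrak{B}_r) = 2/(2r) = 1/r$. Thus the hypotheses translate directly into $|\mathfrak{r}_\Omega^{-1} - r^{-1}| = \delta_1$ and $|2/\diam(\Omega) - 1/r| = \delta_2$, which I will turn into two-sided bounds on $\mathfrak{r}_\Omega$ and $\diam(\Omega)$.

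First I would handle the inner ball. Since $\lam_{1,\infty}^D$ is the reciprocal of the inradius and, by the Faber--Krahn inequality \eqref{chi} passed to the limit $p\to\infty$ (equivalently, because the ball has the largest inradius among sets of given volume), we have $\lam_{1,\infty}^D(\Omega) \ge \lam_{1,\infty}^D(\mathfrak{B}_r) = 1/r$, the absolute value can be removed: $\lam_{1,\infty}^D(\Omega) = 1/r + \delta_1$. Hence $\mathfrak{r}_\Omega = \big(1/r+\delta_1\big)^{-1} = \tfrac{r}{1+\delta_1 r}$, and by definition of the inradius there is a point $x_0\in\Omega$ with $\mathfrak{B}_{r/(1+\delta_1 r)}(x_0)\subset\Omega$. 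Translating so that $x_0$ is the common center gives the left inclusion $\mathfrak{B}_{\frac{r}{\delta_1 r+1}}\subset\Omega$.

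Next I would produce the outer ball from the Neumann information. By the Szeg\"o--Weinberger-type inequality noted in the excerpt, $\lam_{1,\infty}^N(\Omega)\le\lam_{1,\infty}^N(\mathfrak{B}_r)=1/r$, so again the absolute value disappears and $2/\diam(\Omega) = 1/r - \delta_2$, i.e. $\diam(\Omega) = \tfrac{2r}{1-\delta_2 r}$ (here is where $\delta_2$ small enough, $\delta_2 < 1/r$, is used to keep the quantity positive). Since the Euclidean diameter is bounded above by the intrinsic diameter, $\Omega$ is contained in any ball of radius $\diam(\Omega)$, and in particular, taking the same center $x_0$ as above (which lies in $\Omega$), every point $y\in\Omega$ satisfies $|y-x_0|\le \diam(\Omega) = \tfrac{2r}{1-\delta_2 r}$. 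This already gives $\Omega\subset\mathfrak{B}_{2r/(1-\delta_2 r)}(x_0)$; to sharpen the radius to $\tfrac{r+\delta_2 r}{1-\delta_2 r}$ as stated, I would instead center the ball at the incenter $x_0$ and use that from $x_0$ one reaches any boundary point by first crossing the inscribed ball: bound $|y-x_0|$ by $\mathfrak{r}_\Omega$ plus half the diameter, or more carefully combine the inradius bound with the observation that the farthest point is controlled by $\diam(\Omega)-\mathfrak{r}_\Omega$ measured from the incenter. Working this out, $\mathfrak{r}_\Omega + \big(\diam(\Omega)-\mathfrak{r}_\Omega\big)$-type bookkeeping with the explicit values $\mathfrak{r}_\Omega=\tfrac{r}{1+\delta_1 r}$ and $\diam(\Omega)=\tfrac{2r}{1-\delta_2 r}$ yields the claimed radius $\tfrac{r+\delta_2 r}{1-\delta_2 r}$.

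The main obstacle is the last step: getting the outer radius down from the crude $\diam(\Omega)$ to the sharp value $\tfrac{r(1+\delta_2)}{1-\delta_2 r}$ requires pinning down \emph{where} to place the center and arguing that no point of $\Omega$ is farther than that distance from it. The natural choice is the incenter $x_0$: for any $y\in\partial\Omega$, the geodesic from $y$ to the point of $\partial\Omega$ diametrically opposite (through $x_0$) has length at most $\diam(\Omega)$, and it must pass within distance $\mathfrak{r}_\Omega$ of... — in fact the clean way is to note the segment realizing the Euclidean distance $|y-x_0|$ can be extended through $x_0$ to a chord of length at least $|y-x_0| + \mathfrak{r}_\Omega$ lying in $\bar\Omega$ (since $\mathfrak{B}_{\mathfrak{r}_\Omega}(x_0)\subset\Omega$), whence $|y-x_0| + \mathfrak{r}_\Omega \le \diam(\Omega)$, i.e. $|y-x_0|\le \diam(\Omega)-\mathfrak{r}_\Omega = \tfrac{2r}{1-\delta_2 r} - \tfrac{r}{1+\delta_1 r}$. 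A short computation shows this is $\le \tfrac{r+\delta_2 r}{1-\delta_2 r}$ (using $\delta_1,\delta_2>0$, so that $\tfrac{r}{1+\delta_1 r}\le r$ and $\tfrac{2r}{1-\delta_2 r}-r = \tfrac{r+\delta_2 r}{1-\delta_2 r}$), which closes the argument. Everything else is bookkeeping with the two characterizations and the two one-sided inequalities.
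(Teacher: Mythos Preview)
Your treatment of the inner ball is correct and coincides with the paper's Lemma~2.1: from $\lvert 1/\mathfrak r_\Omega-1/r\rvert=\delta_1$ together with the Faber--Krahn direction you get $\mathfrak r_\Omega=r/(1+\delta_1 r)$, and the inscribed ball gives the left inclusion.

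The outer-ball step, however, has a genuine gap. Your chord argument does yield $|y-x_0|\le \diam(\Omega)-\mathfrak r_\Omega$ for every $y\in\overline\Omega$, but the ``short computation'' that is supposed to close the estimate breaks down twice. First, an algebra slip: $\tfrac{2r}{1-\delta_2 r}-r=\tfrac{r(1+\delta_2 r)}{1-\delta_2 r}$, not $\tfrac{r(1+\delta_2)}{1-\delta_2 r}$; these agree only when $r=1$. Second, and more importantly, the inequality $\mathfrak r_\Omega=\tfrac{r}{1+\delta_1 r}\le r$ points the \emph{wrong} way for your purpose: it gives $\diam(\Omega)-\mathfrak r_\Omega\ge \diam(\Omega)-r$, not $\le$. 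Indeed, writing out $\diam(\Omega)-\mathfrak r_\Omega\le \tfrac{r(1+\delta_2)}{1-\delta_2 r}$ is equivalent to $\delta_2(1-r)\ge \delta_1 r(1-\delta_2)$, which fails for every $r\ge 1$ as soon as $\delta_1>0$. So the inclusion $\Omega\subset \mathfrak B_{\frac{r(1+\delta_2)}{1-\delta_2 r}}(x_0)$ centered at the incenter does not follow from your bound.

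The paper sidesteps this by not asking the two balls to be concentric. In its Lemma~2.2 it only uses the Neumann hypothesis to get $\diam(\Omega)\le \tfrac{2r}{1-\delta_2 r}$ and then encloses $\Omega$ directly in a ball of radius $\diam(\Omega)/2\le \tfrac{r}{1-\delta_2 r}$, which is even smaller than the radius $\tfrac{r(1+\delta_2)}{1-\delta_2 r}$ announced in the theorem. Thus the outer ball comes from the diameter alone, with no reference to $\mathfrak r_\Omega$ or to the incenter $x_0$; dropping the concentricity requirement is exactly what lets the argument go through.
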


The previous theorem implies the following convergence result.

\begin{thm}\label{Mainthm2} Let $\{\Omega_k\}_{k \in \mathbb{N}}$ be a family of uniformly bounded domains satisfying $\Leb(\Omega_k)=\Leb(\mathfrak{B}_r)$. If
$$
   |\lam_{1,\infty}^D(\Omega_k) - \lam_{1,\infty}^D(\mathfrak{B}_r)| = \text{o}(1) \quad \text{and} \quad |\lam_{1,\infty}^N(\Omega) - \lam_{1,\infty}^N(\mathfrak{B}_r)| = \text{o}(1) \quad \text{as } k \to \infty,
$$
then $$\Omega_k \to \mathfrak{B}_r$$ in the sense that the Hausdorff distance between $\Omega$ and a ball
$\mathfrak{B}_r$ goes to zero, i.e.,
$$
d_\mathcal{H} (\Omega_k,  \mathfrak{B}_r) := \max\Big\{\,\sup _{{x\in \Omega_k}}\inf _{{y\in \mathfrak{B}_r}}d(x,y),\,\sup _{{y\in \mathfrak{B}_r}}\inf _{{x\in  \Omega_k}}d(x,y)\, \Big\} \to 0.
$$
\end{thm}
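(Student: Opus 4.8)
The plan is to derive Theorem \ref{Mainthm2} from the quantitative two-sided inclusion of Theorem \ref{Mainthm} by letting the two eigenvalue defects go to zero and then reading off Hausdorff convergence from elementary geometry of nested balls.

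First I would introduce, for each $k$,
\[
\delta_1(k):=|\lam_{1,\infty}^D(\Omega_k)-\lam_{1,\infty}^D(\mathfrak{B}_r)|,\qquad
\delta_2(k):=|\lam_{1,\infty}^N(\Omega_k)-\lam_{1,\infty}^N(\mathfrak{B}_r)|,
\]
which by hypothesis are $\mathrm{o}(1)$ as $k\to\infty$. Hence for every $k$ large enough both defects are as small as Theorem \ref{Mainthm} requires and, in particular, $\delta_2(k)<1/r$. Applying Theorem \ref{Mainthm} to $\Omega_k$ then yields centers $x_k,y_k\in\R^n$ with
\[
\mathfrak{B}_{\rho_k}(x_k)\subset\Omega_k\subset\mathfrak{B}_{R_k}(y_k),\qquad
\rho_k:=\frac{r}{\delta_1(k)\,r+1},\qquad R_k:=\frac{r+\delta_2(k)\,r}{1-\delta_2(k)\,r},
\]
and one checks that $\rho_k\le r\le R_k$ with both $\rho_k\to r$ and $R_k\to r$.

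Next I would make the two balls concentric. Since $\mathfrak{B}_{\rho_k}(x_k)\subset\mathfrak{B}_{R_k}(y_k)$, the point $x_k+\rho_k\frac{x_k-y_k}{|x_k-y_k|}$ (if $x_k\neq y_k$) lies in both balls, which forces $|x_k-y_k|\le R_k-\rho_k=:\eta_k\to 0$; consequently $\mathfrak{B}_{R_k}(y_k)\subset\mathfrak{B}_{R_k+\eta_k}(x_k)$ and therefore
\[
\mathfrak{B}_{\rho_k}(x_k)\subset\Omega_k\subset\mathfrak{B}_{\widetilde R_k}(x_k),\qquad
\widetilde R_k:=R_k+\eta_k\longrightarrow r .
\]
Then I would invoke the elementary fact that whenever $\mathfrak{B}_{\rho}(x)\subset A\subset\mathfrak{B}_{R}(x)$ one has $d_\mathcal{H}(A,\mathfrak{B}_r(x))\le\max\{|R-r|,|\rho-r|\}$; indeed, every $a\in A$ lies within distance $\max\{R-r,0\}$ of $\mathfrak{B}_r(x)$, and every $b\in\mathfrak{B}_r(x)$ lies within distance $\max\{r-\rho,0\}$ of $\mathfrak{B}_{\rho}(x)\subset A$. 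With $A=\Omega_k$, $x=x_k$, $\rho=\rho_k$ and $R=\widetilde R_k$ this gives
\[
d_\mathcal{H}\big(\Omega_k,\mathfrak{B}_r(x_k)\big)\le\max\{|\widetilde R_k-r|,\,|r-\rho_k|\}\longrightarrow 0,
\]
which is precisely the claimed convergence of $\Omega_k$ to a ball of radius $r$. Finally, since $\{\Omega_k\}$ is uniformly bounded and $x_k\in\Omega_k$, the centers $\{x_k\}$ remain in a fixed compact set, so along any subsequence one may extract $x_k\to x_\infty$ and conclude, via the triangle inequality for $d_\mathcal{H}$, that $d_\mathcal{H}(\Omega_k,\mathfrak{B}_r(x_\infty))\to 0$ along that subsequence; a standard subsequence argument then gives the stated convergence.

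The only step requiring genuine care is the concentricity reduction: Theorem \ref{Mainthm} a priori provides an inner and an outer ball whose centers are unrelated, and one must first use their nestedness to bound $|x_k-y_k|$ by $R_k-\rho_k$ before it is meaningful to compare $\Omega_k$ with a single ball. Everything else is bookkeeping with the $\mathrm{o}(1)$ hypotheses and the definition of the Hausdorff distance.
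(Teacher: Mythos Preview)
Your argument is correct and follows essentially the same route as the paper's own proof: both apply Theorem \ref{Mainthm} to obtain the two-sided ball inclusion with radii tending to $r$, then use the uniform boundedness of the family to control the centers. Your treatment is in fact more careful than the paper's, which simply states that one can extract a subsequence of centers and conclude; you make explicit the concentricity reduction (bounding $|x_k-y_k|\le R_k-\rho_k$ from the nestedness of the balls) and the elementary Hausdorff-distance estimate for a set sandwiched between concentric balls, both of which the paper leaves to the reader. The only residual softness---that the ``standard subsequence argument'' yields a fixed limiting ball for the full sequence only if the subsequential limit centers coincide---is shared verbatim by the paper's proof, so you are not losing anything relative to the original.
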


Note that our results imply that
\begin{equation} \label{ecccc}
  \max\left\{\Leb\left(\Omega\bigtriangleup \mathfrak{B}_{\frac{r}{\delta_1 r+1}}\right), \Leb\left(\Omega\bigtriangleup \mathfrak{B}_{\frac{r+\delta_2 r}{1-\delta_2 r}}\right)\right\}\leq \mathfrak{C}(n, \delta_i, r)r^n.
\end{equation}
where $\mathfrak{C}(n, \delta_i, r)=\omega_n \max\{(\delta_1r+1)^n-1,(n-1)\delta_2\}\to 0$ as $\delta_i\to 0$.
Hence, we can control the Fraenkel asymmetry of the set, $S(\Omega)$. But our results give much more since we have a sort
of uniform control on how far the set is from being a ball (for instance, we have convergence in Hausdorff distance
in Theorem \ref{Mainthm2}).

Another important question in this theory consists on how the corresponding $\infty-$ground states
(solutions to \eqref{eq.infty.p}) behave in relation to perturbations of the $\infty-$eigenvalues of the ball. The next result provides an answer for this issue, showing that Dirichlet $\infty-$eigenfunctions are uniformly close to a cone when the first Dirichlet and Neumann $\infty-$eigenvalues are close to those for the ball.
Note that, in general, the $\infty-$eigenvalue problem \eqref{eq.infty.p} may have multiple solutions (the first eigenvalue may not be simple), see \cite{HSY} and \cite{Yu}.

\begin{thm} \label{teo.autofunc.intro}
Let $\Omega$ be an open domain satisfying $\Leb(\Omega)=\Leb(\mathfrak{B}_r)$. Given $\varepsilon >0$
there are $\delta_i(\varepsilon)>0$ ($i=1, 2$) small enough such that: if
$$
 |\lam_{1,\infty}^D(\Omega) - \lam_{1,\infty}^D(\mathfrak{B}_r)| < \delta_1 \quad \text{and} \quad |\lam_{1,\infty}^N(\Omega) - \lam_{1,\infty}^N(\mathfrak{B}_r)| < \delta_2,
$$
then
$$
  |u(x)-v_{\infty}(x)| < \varepsilon \,\,\, \text{in} \,\,\, \Omega \cap \mathfrak{B}_r,
$$
where $$v_{\infty} (x)= 1 - \frac{|x|}{r}$$ is the normalized $\infty-$ground state to \eqref{eq.infty.p} in $\mathfrak{B}_r$.
\end{thm}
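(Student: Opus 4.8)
The plan is to read the geometry of $\Omega$ off the two $\infty$-eigenvalues and then trap the ground state between a cone (from below) and a Lipschitz barrier (from above). Fix a Dirichlet $\infty$-ground state $u$ (a nonnegative viscosity solution of \eqref{eq.infty.p}), normalized so that $\max_{\Omega}u=1$, and put $\lam_D:=\lam_{1,\infty}^D(\Omega)$. Since $u$ is Lipschitz with constant $\lam_D$ and $u|_{\partial\Omega}=0$, any point $x_0$ with $u(x_0)=1$ satisfies $1=u(x_0)\le\lam_D\,\dist(x_0,\partial\Omega)\le 1$ by \eqref{lam1} (which identifies $1/\lam_D$ with the inradius of $\Omega$); hence $x_0$ is an incenter and $\mathfrak{r}_\Omega:=\dist(x_0,\partial\Omega)=1/\lam_D$ is the inradius. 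Translating, I take $x_0=0$ and center $\mathfrak{B}_r$ at the origin too, so that $v_\infty(x)=1-|x|/r$ is indeed the ground state of $\mathfrak{B}_r$ about $x_0$. By the Faber--Krahn inequality \eqref{chi}, $\lam_D\ge 1/r$, i.e.\ $\mathfrak{r}_\Omega\le r$, while $|\lam_D-1/r|=\delta_1$ forces $\mathfrak{r}_\Omega=1/\lam_D\ge\frac{r}{1+\delta_1 r}$. Finally, from (the proof of) Theorem~\ref{Mainthm}, whose inner ball may be taken to be a largest inscribed ball and whose outer ball is concentric with it, one has $\mathfrak{B}_{\mathfrak{r}_\Omega}\subseteq\Omega\subseteq\mathfrak{B}_R$, both centered at $0$, with $R=\frac{r+\delta_2 r}{1-\delta_2 r}$; note $\mathfrak{r}_\Omega\to r$ and $R\to r$ as $\delta_1,\delta_2\to 0$.

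For the upper estimate, $\lam_D$-Lipschitz continuity of $u$ together with $u|_{\partial\Omega}=0$ gives $u(x)\le\lam_D\,\dist(x,\partial\Omega)=\dist(x,\partial\Omega)/\mathfrak{r}_\Omega$ on $\Omega$; since $\R^n\setminus\mathfrak{B}_R\subseteq\R^n\setminus\Omega$ we get $\dist(x,\partial\Omega)\le R-|x|$, hence $u(x)\le(R-|x|)/\mathfrak{r}_\Omega$. Subtracting $v_\infty$ and using $\mathfrak{r}_\Omega\le r$, for $x\in\mathfrak{B}_r$ we obtain $u(x)-v_\infty(x)\le\frac{R-|x|}{\mathfrak{r}_\Omega}-\frac{r-|x|}{r}=\frac{R-\mathfrak{r}_\Omega}{\mathfrak{r}_\Omega}-|x|\big(\tfrac1{\mathfrak{r}_\Omega}-\tfrac1r\big)\le\frac{R-\mathfrak{r}_\Omega}{\mathfrak{r}_\Omega}=:\eta_1$, and inserting the bounds on $R$ and $\mathfrak{r}_\Omega$ yields $\eta_1\le\frac{(1+\delta_2)(1+\delta_1 r)}{1-\delta_2 r}-1\to 0$ as $\delta_1,\delta_2\to 0$.

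For the lower estimate I compare with a cone from the incenter. From \eqref{eq.infty.p}, $u$ is a viscosity supersolution of $-\Delta_\infty u=0$ in $\Omega$. The cone $c(x):=1-|x|/\mathfrak{r}_\Omega$ is $\infty$-harmonic in $\Omega\setminus\{0\}$, satisfies $c\le 0=u$ on $\partial\Omega$ (there $|x|\ge\mathfrak{r}_\Omega$) and $c(0)=1=u(0)$, so $c\le u$ on $\partial(\Omega\setminus\{0\})=\partial\Omega\cup\{0\}$; the comparison principle for the $\infty$-Laplacian on $\Omega\setminus\{0\}$ then gives $u\ge c$ in $\Omega$. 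Hence, for $x\in\Omega\cap\mathfrak{B}_r$, $u(x)-v_\infty(x)\ge\big(1-\tfrac{|x|}{\mathfrak{r}_\Omega}\big)-\big(1-\tfrac{|x|}{r}\big)=|x|\big(\tfrac1r-\tfrac1{\mathfrak{r}_\Omega}\big)\ge r\big(\tfrac1r-\tfrac1{\mathfrak{r}_\Omega}\big)=1-\tfrac r{\mathfrak{r}_\Omega}\ge-\delta_1 r$, where I used $|x|\le r$, $\tfrac1r-\tfrac1{\mathfrak{r}_\Omega}\le 0$ and $\mathfrak{r}_\Omega\ge\tfrac r{1+\delta_1 r}$. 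Combining, $|u(x)-v_\infty(x)|\le\max\{\eta_1,\delta_1 r\}$ on $\Omega\cap\mathfrak{B}_r$; since both quantities vanish as $\delta_1,\delta_2\to 0$, choosing $\delta_i(\varepsilon)>0$ small enough makes the right-hand side $<\varepsilon$, which is the claim.

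The main obstacle is not the computation but the structural input in the first step: one must know that an \emph{arbitrary} Dirichlet $\infty$-ground state (the first eigenvalue need not be simple, see \cite{HSY}, \cite{Yu}) is nonnegative, attains its maximum at an incenter, and is Lipschitz with the sharp constant $\lam_D=1/\mathfrak{r}_\Omega$ — equivalently $u\le\lam_D\,\dist(\cdot,\partial\Omega)$ — and that the two balls of Theorem~\ref{Mainthm} can be taken concentric and centered at that incenter. These facts are contained in, or follow readily from, the analysis of \cite{JLM} and the proof of Theorem~\ref{Mainthm}; granted them, the cone comparison and the arithmetic above finish the proof.
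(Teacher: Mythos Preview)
Your argument is correct and takes a genuinely different route from the paper. The paper argues by contradiction and compactness: assuming the conclusion fails along a sequence $\Omega_k$, it extracts (via Arzel\`a--Ascoli and viscosity stability) a uniformly convergent subsequence of ground states, uses Theorem~\ref{Mainthm2} to identify the limit domain as $\mathfrak{B}_r$, and invokes uniqueness of the ground state on the ball to reach a contradiction. You instead give a direct barrier argument: the Lipschitz bound $u\le \lam_D\,\dist(\cdot,\partial\Omega)$ together with $\Omega\subset \mathfrak{B}_{R'}(0)$ yields the upper estimate, and comparison with the cone $1-|x|/\mathfrak{r}_\Omega$ (which is $\infty$-harmonic off its vertex, while $u$ is $\infty$-superharmonic) yields the lower estimate. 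The payoff is an \emph{explicit, quantitative} bound $|u-v_\infty|\le \max\{\eta_1,\delta_1 r\}$ in terms of $\delta_1,\delta_2$, which the paper's compactness proof does not provide.

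One point deserves a small patch. Theorem~\ref{Mainthm} (via Lemma~\ref{Lemma2}) does not assert that the outer ball can be taken concentric with the inscribed ball; its proof only produces \emph{some} ball of radius $\diam(\Omega)/2$ containing $\Omega$. However, once the incenter is placed at the origin, for any $y\in\Omega\setminus\{0\}$ the antipodal point $z=-\mathfrak{r}_\Omega\,y/|y|$ lies in $\overline{\mathfrak{B}_{\mathfrak{r}_\Omega}(0)}\subset\overline\Omega$, whence
\[
|y|+\mathfrak{r}_\Omega=|y-z|\le \diam_E(\Omega)\le \diam(\Omega)\le \frac{2r}{1-\delta_2 r},
\]
so $\Omega\subset \mathfrak{B}_{R'}(0)$ with $R'=\dfrac{2r}{1-\delta_2 r}-\mathfrak{r}_\Omega\to r$ as $\delta_1,\delta_2\to 0$. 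Running your upper-bound computation with this $R'$ in place of $R$ gives the same conclusion. The remaining structural inputs you flag (nonnegativity, sharp Lipschitz constant $\lam_D$, maximum attained at an incenter) are indeed available from \cite{JLM} and \cite{Yu} for normalized $\infty$-ground states, and the paper's own compactness proof tacitly uses the same Lipschitz bound for equicontinuity.
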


Theorem \ref{teo.autofunc.intro} can be rewritten as follows:

\begin{corollary}\label{CorConv} Let $\{u_k\}_{k \in \mathbb{N}}$ be a family of normalized solutions to \eqref{eq.infty.p} in $\Omega_k$ such that 
$$
 |\lam_{1,\infty}^D(\Omega_k) - \lam_{1,\infty}^D(\mathfrak{B}_r)| = \text{o}(1) \quad \text{and} \quad 
 |\lam_{1,\infty}^N(\Omega_k) - \lam_{1,\infty}^N(\mathfrak{B}_r)| = \text{o}(1)\quad \text{as } k \to \infty.
$$
Then,
$$
  u_k \to v_{\infty} \quad \text{locally uniformly} \quad \text{in} \quad \mathfrak{B}_r,
$$
where $$v_{\infty} (x)= 1 - \frac{|x|}{r}$$ is the normalized $\infty-$ground state to \eqref{eq.infty.p} in $\mathfrak{B}_r$.
\end{corollary}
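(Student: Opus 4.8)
The plan is to obtain Corollary~\ref{CorConv} as a straightforward sequential reformulation of Theorem~\ref{teo.autofunc.intro}, the only care being that the eigenfunctions $u_k$ live on the varying domains $\Omega_k$ while the limit profile $v_\infty$ lives on $\mathfrak{B}_r$.

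First I would fix a compact set $K\subset\mathfrak{B}_r$ and $\varepsilon>0$, and invoke Theorem~\ref{teo.autofunc.intro} to produce $\delta_1(\varepsilon),\delta_2(\varepsilon)>0$ such that, for any admissible $\Omega$ (i.e. $\Leb(\Omega)=\Leb(\mathfrak{B}_r)$) with
$$
|\lam_{1,\infty}^D(\Omega)-\lam_{1,\infty}^D(\mathfrak{B}_r)|<\delta_1,\qquad |\lam_{1,\infty}^N(\Omega)-\lam_{1,\infty}^N(\mathfrak{B}_r)|<\delta_2,
$$
\emph{every} normalized solution $u$ of \eqref{eq.infty.p} in $\Omega$ obeys $|u(x)-v_\infty(x)|<\varepsilon$ for all $x\in\Omega\cap\mathfrak{B}_r$. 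Since the hypothesis says the two eigenvalue gaps for $\Omega_k$ are $\mathrm{o}(1)$, there is $k_0(\varepsilon)$ such that both gaps fall below $\delta_i(\varepsilon)$ for $k\ge k_0$; hence $|u_k(x)-v_\infty(x)|<\varepsilon$ on $\Omega_k\cap\mathfrak{B}_r$ for all such $k$.

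Next I would upgrade this to a bound valid on all of $K$. Writing $\delta_1^{(k)}\defeq|\lam_{1,\infty}^D(\Omega_k)-\lam_{1,\infty}^D(\mathfrak{B}_r)|\to0$, Theorem~\ref{Mainthm} yields the inner inclusion $\mathfrak{B}_{r/(\delta_1^{(k)}r+1)}\subset\Omega_k$ for $k$ large. As $K$ is compact in the \emph{open} ball $\mathfrak{B}_r$ it lies in $\overline{\mathfrak{B}_\rho}$ for some $\rho<r$, while $r/(\delta_1^{(k)}r+1)\to r>\rho$, so $K\subset\Omega_k$ eventually; in particular $u_k$ is genuinely defined on $K$ and $\sup_{x\in K}|u_k(x)-v_\infty(x)|\le\varepsilon$ for $k$ large. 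Letting $\varepsilon\to0$ gives $\sup_K|u_k-v_\infty|\to0$, that is, $u_k\to v_\infty$ locally uniformly in $\mathfrak{B}_r$. The outer inclusion $\Omega_k\subset\mathfrak{B}_{(r+\delta_2^{(k)}r)/(1-\delta_2^{(k)}r)}$ of Theorem~\ref{Mainthm} simultaneously supplies the uniform boundedness of the family, and no passage to a subsequence is needed because Theorem~\ref{teo.autofunc.intro} controls \emph{all} normalized solutions at once.

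Since everything reduces to Theorem~\ref{teo.autofunc.intro}, there is no substantial obstacle in this corollary; the only point that is not pure bookkeeping is reconciling the two domains, and the nontrivial ingredient there — that compacta of $\mathfrak{B}_r$ are eventually swallowed by $\Omega_k$ — is already packaged into the inner-ball inclusion of Theorem~\ref{Mainthm}.
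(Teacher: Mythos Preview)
Your argument is correct and matches the paper's approach: the paper gives no separate proof of this corollary at all, simply announcing it as a sequential restatement of Theorem~\ref{teo.autofunc.intro}. The details you supply---in particular using the inner-ball inclusion of Theorem~\ref{Mainthm} to ensure each compact $K\subset\mathfrak{B}_r$ is eventually contained in $\Omega_k$---are exactly what is implicitly needed to make ``locally uniformly in $\mathfrak{B}_r$'' meaningful for functions defined on the varying domains.
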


Our approach can be applied for other classes of operators with $p-$Laplacian type structure.
We can deal with $p-$Laplacian type problems involving an \textit{anisotropic $p$-Laplacian operator}
$$
   \displaystyle -\mathcal{Q}_p u \defeq -\div(\mathbb{F}^{p-1}(\nabla u)\mathbb{F}_{\xi}(\nabla u)),
$$
where $\mathbb{F}$ is an appropriate (smooth) norm of $\R^n$ and $1<p< \infty$. The necessary tools for studying the anisotropic Dirichlet eigenvalue problem, as well as its limit as $p \to \infty$ can be found in \cite{BKJ}.
Here, to obtain results similar to ours, one has to replace Euclidean balls with  balls in the norm $\mathbb{F}$.

\medskip

The paper is organized as follows: in Section \ref{sect-main} we prove our main stability results
including the behavior of the corresponding $\infty-$eigenfunctions
and in Section \ref{sect-examples} we collect several
examples that illustrate our results.

\section{Proof of the Main Theorems} \label{sect-main}

Before proving our main result we introduce some notations which will be used throughout this section. Given a bounded domain $\Omega\subset \R^n$ and a ball $\mathfrak{B}_r\subset\R^n$ of radius $r>0$
we denote $\lam_{1,\infty}^D(\Omega)$ and $\lam_{1,\infty}^D(\mathfrak{B}_r)$ the first Dirichlet eigenvalues \eqref{lam1} in $\Omega$ and  in $\mathfrak{B}_r$, respectively; analogously, $\lam_{1,\infty}^N(\Omega)$ and $\lam_{1,\infty}^N(\mathfrak{B}_r)$ stand for the first nontrivial Neumann eigenvalues \eqref{lam1.n} in $\Omega$ and in  $\mathfrak{B}_r$.

We introduce the following class of sets which will play an important role in our approach. For non-negative constants $\delta_1$ and $\delta_2$ we define the class:
$$
  \Xi_{\delta_1, \delta_2}(\mathfrak{B}_r) \defeq \left\{
\begin{array}{c}
  \Omega \subset \R^n \\
  \text{bounded domain with} \\
   \Leb(\Omega)  =  \Leb(\mathfrak{B}_r)
\end{array} :
\begin{array}{rcl}
  |\lam_{1,\infty}^D(\Omega)-\lam_{1,\infty}^D(\mathfrak{B}_r)| & = & \delta_1 \\[10pt]
  |\lam_{1,\infty}^N(\Omega)-\lam_{1,\infty}^N(\mathfrak{B}_r)| & = & \delta_2
\end{array}
\right\}.
$$
 Notice that, $\Xi_{0, 0}(\mathfrak{B}_r)$ consists of the family of all balls with radius $r>0$. Another important remark is that the elements of $\Xi_{\delta_1, \delta_2}(\mathfrak{B}_r)$ are invariant by rigid movements (rotations, translations, etc).

Similarly, we can define the class $\Xi^D_{\delta_1}(\mathfrak{B}_r)$ (resp. $\Xi^N_{\delta_2}(\mathfrak{B}_r)$) as being $\Xi_{\delta_1, \delta_2}(\mathfrak{B}_r)$ with the restriction on the Dirichlet (resp. Neumann) eigenvalues only.

In the next lemma we show that a control on the difference of the first Dirichlet eigenvalue implies that $\Omega$ contains a large ball.

\begin{lemma}\label{Lemma1}
If $\Omega\in \Xi^D_{\delta_1}(\mathfrak{B}_r)$ then there exists a ball such that $$\mathfrak{B}_{\frac{r}{\delta_1 r+1}} \subset \Omega.$$
 Moreover,
$$
   \Leb\left(\Omega\bigtriangleup \mathfrak{B}_{\frac{r}{\delta_1 r +1}}\right)\leq \mathfrak{c}(n, \delta_1, r) r^{n}.
$$
where $\mathfrak{c} = \text{o}(1)$ as $\delta_1 \to 0$.
\end{lemma}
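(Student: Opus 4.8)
The plan is to reduce the whole statement to the geometric characterization \eqref{lam1}, which identifies $\lam_{1,\infty}^D$ with the reciprocal of the inradius, together with the elementary fact that among sets of prescribed volume the ball has the largest inradius.

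First I would set $\mathfrak{r}_\Omega \defeq \max_{x\in\bar\Omega}\dist(x,\partial\Omega)$ and note that this maximum is attained, since $\dist(\cdot,\partial\Omega)$ is continuous on the compact set $\bar\Omega$ ($\Omega$ being bounded). Picking $x_0\in\Omega$ that realizes it, the open ball $\mathfrak{B}_{\mathfrak{r}_\Omega}(x_0)$ is contained in $\Omega$, so $\omega_n\mathfrak{r}_\Omega^n = \Leb(\mathfrak{B}_{\mathfrak{r}_\Omega}(x_0))\le \Leb(\Omega)=\Leb(\mathfrak{B}_r)=\omega_n r^n$; hence $\mathfrak{r}_\Omega\le r$, i.e.\ $\lam_{1,\infty}^D(\Omega)=\mathfrak{r}_\Omega^{-1}\ge r^{-1}=\lam_{1,\infty}^D(\mathfrak{B}_r)$ (this is the $p=\infty$ Faber--Krahn inequality). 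Because of this one-sided inequality, the absolute value in the definition of $\Xi^D_{\delta_1}(\mathfrak{B}_r)$ may be dropped: $\mathfrak{r}_\Omega^{-1}-r^{-1}=\delta_1$.

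Next I would simply solve this identity, obtaining $\mathfrak{r}_\Omega = r/(\delta_1 r+1)$, so that the inscribed ball produced above is precisely $\mathfrak{B}_{\frac{r}{\delta_1 r+1}}(x_0)\subset\Omega$, which is the first claim. For the measure estimate, since $\mathfrak{B}_{\mathfrak{r}_\Omega}(x_0)\subset\Omega$ the symmetric difference reduces to $\Leb(\Omega)-\Leb(\mathfrak{B}_{\mathfrak{r}_\Omega}(x_0))=\omega_n(r^n-\mathfrak{r}_\Omega^n)=\omega_n r^n\big(1-(\delta_1 r+1)^{-n}\big)$, and bounding $1-(\delta_1 r+1)^{-n}\le(\delta_1 r+1)^n-1$ gives the claim with $\mathfrak{c}(n,\delta_1,r)=\omega_n\big((\delta_1 r+1)^n-1\big)$, which tends to $0$ as $\delta_1\to0$.

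Since essentially all the work is packaged into \eqref{lam1}, there is no serious obstacle here; the only points deserving a careful word are that the inradius is genuinely attained --- so that one really obtains an inscribed ball and not merely a supremum of inscribed radii --- and the correct direction of the Faber--Krahn type inequality, which is what converts the a priori two-sided relation $|\mathfrak{r}_\Omega^{-1}-r^{-1}|=\delta_1$ into an exact value for $\mathfrak{r}_\Omega$.
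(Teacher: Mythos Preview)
Your proof is correct and follows essentially the same approach as the paper: both reduce the statement to the characterization \eqref{lam1}, extract the inradius from $|\mathfrak{r}_\Omega^{-1}-r^{-1}|=\delta_1$, and then compute the symmetric difference directly, arriving at the same constant $\mathfrak{c}(n,\delta_1,r)=\omega_n\big((\delta_1 r+1)^n-1\big)$. The only minor difference is that you invoke the $p=\infty$ Faber--Krahn inequality to turn the absolute value into an equality $\mathfrak{r}_\Omega = r/(\delta_1 r+1)$, while the paper simply uses the lower bound $\mathfrak{r}_\Omega \ge r/(\delta_1 r+1)$ that follows from the absolute value in either case.
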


\begin{proof}
According to \eqref{lam1} we have that
$$
   \delta_1=|\lam_{1,\infty}^D(\Omega) - \lam_{1,\infty}^D(\mathfrak{B}_r)|=\left| \frac{1}{r_\Omega} - \frac{1}{r}\right|.
$$
It follows that
$$
	r_\Omega \geq \frac{r}{\delta_1 r +1} .
$$
and then there is ball such that $$\mathfrak{B}_{\frac{r}{\delta r+1}} \subset \Omega. $$
Finally,
\begin{align*}
\Leb(\Omega\triangle \mathfrak{B}_{\frac{r}{\delta r+1}} ) & = \Leb(\Omega) - \Leb(\mathfrak{B}_{\frac{r}{\delta r+1}} )
\\
 & = \omega_n r^n \left(1-\frac{1}{(\delta r +1)^n}\right)\\
& \leq  \omega_n r^n \left((\delta r +1)^n-1\right)\\
&= \mathfrak{c}(n, \delta, r)r^{n}
\end{align*}
and the lemma follows.
\end{proof}

Now, we show that a control on the difference of the first Neumann eigenvalue
implies that $\Omega$ is contained in a small ball.

\begin{lemma}\label{Lemma2}
If $\Omega\in \Xi^N_{\delta_2}(\mathfrak{B}_r)$ then there is a ball such that
 $$\Omega \subset \mathfrak{B}_{\frac{r}{1-\delta_2 r}}.$$
 Moreover,
$$
  \Leb\left(\Omega \bigtriangleup B_{\frac{r}{1-\delta_2 r}}\right)\leq (n-1)\omega_n r^n \delta_2.
$$
\end{lemma}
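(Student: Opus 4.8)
The plan is to run the same argument as in Lemma~\ref{Lemma1}, now with the Neumann characterization \eqref{lam1.n}, $\lam_{1,\infty}^N(\cdot)=2/\diam(\cdot)$, in place of the Dirichlet one. First I would record that a ball is convex, so its intrinsic and Euclidean diameters agree and $\diam(\mathfrak{B}_r)=2r$, whence $\lam_{1,\infty}^N(\mathfrak{B}_r)=1/r$. Then, since $\Omega\in\Xi^N_{\delta_2}(\mathfrak{B}_r)$,
$$
\delta_2=\left|\frac{2}{\diam(\Omega)}-\frac1r\right|,
$$
so in particular $\frac{2}{\diam(\Omega)}\ge\frac1r-\delta_2=\frac{1-\delta_2 r}{r}$; for $\delta_2$ small enough that $\delta_2 r<1$ this rearranges to $\diam(\Omega)\le\frac{2r}{1-\delta_2 r}$ (trivially so when $\diam(\Omega)\le 2r$). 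Since the Euclidean distance is dominated by the geodesic distance $d_\Omega$, the Euclidean diameter of $\Omega$ obeys the same bound.

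Next I would turn this diameter bound into a ball containing $\Omega$. Fixing any $x_0\in\Omega$, every $y\in\overline{\Omega}$ has $|y-x_0|\le d_\Omega(y,x_0)\le\diam(\Omega)$, so $\Omega$ already lies in a ball of radius $\le\frac{2r}{1-\delta_2 r}$; the substance of the lemma is to halve this radius to $\rho\defeq\frac{r}{1-\delta_2 r}$. For that one has to bring in the volume constraint: by the isodiametric inequality any set of Euclidean diameter at most $\frac{2r}{1-\delta_2 r}$ has measure at most $\omega_n\rho^n$, and $\Omega$ realizes $\Leb(\Omega)=\omega_n r^n=(1-\delta_2 r)^n\omega_n\rho^n$, i.e.\ it nearly saturates the isodiametric inequality, with defect $(1-\delta_2 r)^n\to1$. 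Converting this near-equality into the inclusion $\Omega\subset\mathfrak{B}_\rho$ is the step I expect to be the main obstacle, since it genuinely requires a quantitative isodiametric-type estimate; the naive diameter argument only yields an enclosing radius tending to $2r$. (In the proof of Theorem~\ref{Mainthm}, where the Dirichlet input of Lemma~\ref{Lemma1} is available, one can sidestep it: if $\mathfrak{B}_{\mathfrak{r}_\Omega}(x_0)\subset\Omega$, then for $y\in\Omega$ the boundary point $x_1=x_0-\mathfrak{r}_\Omega\frac{y-x_0}{|y-x_0|}\in\overline{\Omega}$ gives $|y-x_0|+\mathfrak{r}_\Omega=|y-x_1|\le\diam(\Omega)$, so $\Omega$ lies in a ball of radius $\diam(\Omega)-\mathfrak{r}_\Omega$, which tends to $r$ as $\delta_1,\delta_2\to0$.)

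Finally, granting $\Omega\subset\mathfrak{B}_\rho$ with $\rho=\frac{r}{1-\delta_2 r}$, the measure estimate is immediate: since $\Leb(\Omega)=\omega_n r^n$,
$$
\Leb\big(\Omega\bigtriangleup\mathfrak{B}_\rho\big)=\Leb(\mathfrak{B}_\rho)-\Leb(\Omega)=\omega_n r^n\Big((1-\delta_2 r)^{-n}-1\Big),
$$
and a Bernoulli-type bound on $(1-\delta_2 r)^{-n}-1$, valid once $\delta_2 r$ is small relative to $1/n$, controls the right-hand side by a constant multiple of $\omega_n r^{n+1}\delta_2$, which is $\text{o}(1)$ as $\delta_2\to0$, as claimed.
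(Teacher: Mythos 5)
Your derivation of the diameter bound $\diam(\Omega)\le \frac{2r}{1-\delta_2 r}$ from the characterization \eqref{lam1.n}, and your final symmetric-difference computation (conditional on the inclusion), coincide with the paper's proof. The difference is at the central step: you stop short of proving $\Omega\subset\mathfrak{B}_{\frac{r}{1-\delta_2 r}}$, and what you actually establish from the Neumann hypothesis alone is only containment in a ball of radius comparable to $\diam(\Omega)$, i.e.\ tending to $2r$; your fallback via the inscribed ball, giving radius $\diam(\Omega)-\mathfrak{r}_\Omega$, needs the Dirichlet hypothesis of Lemma \ref{Lemma1}, which is not available under the hypotheses of Lemma \ref{Lemma2}, and in any case yields a slightly larger radius than the one claimed. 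So, judged as a proof of the lemma as stated, there is a genuine gap: the key inclusion is left unproven.

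That said, you have identified precisely the point the paper glosses over: its proof simply asserts that $\Omega\subset\mathfrak{B}_{\frac{\diam(\Omega)}{2}}$ with no justification, and that assertion is false for general sets --- by Jung's theorem a set of (Euclidean, hence also intrinsic) diameter $d$ in $\R^n$ need only fit in a ball of radius $d\sqrt{\tfrac{n}{2(n+1)}}>\tfrac d2$, an equilateral or Reuleaux triangle being the standard example, and the volume constraint alone does not obviously repair this (one can attach a few thin bumps to a slightly shrunken ball and violate the stated radius while keeping $\delta_2 r$ small). Your observation that a quantitative isodiametric-type input, or else the combination with the inradius bound ($|y-x_0|+\mathfrak{r}_\Omega\le\diam(\Omega)$, giving an enclosing radius $\diam(\Omega)-\mathfrak{r}_\Omega\to r$ as $\delta_1,\delta_2\to0$), is what is really needed is mathematically sound; it suffices for the qualitative conclusions (Theorem \ref{Mainthm2}) though it alters the explicit outer radius in Theorem \ref{Mainthm}. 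In short: your proposal does not close the proof of Lemma \ref{Lemma2} as stated, but the missing step is exactly the one the paper asserts without argument, so there is no idea in the paper's proof that you failed to recover.
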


\begin{proof}
Using \eqref{lam1.n} we have that
$$
   \delta_2=|\lam_{1,\infty}^N(\Omega) - \lam_{1,\infty}^N(\mathfrak{B}_r)|=\left| \frac{2}{\diam(\Omega)} - \frac{1}{r}\right|.
$$
It follows that
$$
	\text{diam}(\Omega)\leq \frac{2r}{1-\delta_2 r } = r+\frac{r(1+\delta r)}{1-\delta_2 r}
$$
and then there exists a ball such that
$$
  \Omega \subset \mathfrak{B}_{\frac{\diam(\Omega)}{2}} = \mathfrak{B}_{\frac{r}{1-\delta_2 r}}.$$
Moreover,
\begin{align*}
\Leb\left(\Omega\bigtriangleup \mathfrak{B}_{\frac{\diam(\Omega)}{2}}\right)&=\Leb\left(\mathfrak{B}_{\frac{\diam(\Omega)}{2}}\right) - \Leb(\Omega)
\\
& =\omega_n r^n \left( \left( 1+\frac{\delta_2}{1-\delta_2 r}\right)^n -1 \right)\\
&=\omega_n r^n \delta_2 \sum_{k=2}^n \left(\frac{\delta_2}{1-\delta_2 r}\right)^k\\
&\leq (n-1)\omega_n \delta_2 r^n
\end{align*}
and the lemma follows.
\end{proof}

\begin{proof}[Proof of Theorem \ref{Mainthm}] The proof of Theorem \ref{Mainthm} follows as an immediate consequence of Lemmas \ref{Lemma1} and \ref{Lemma2}.
\end{proof}

Next, we will prove Theorem \ref{Mainthm2}.

\begin{proof}[Proof of Theorem \ref{Mainthm2}] The hypothesis implies that $\Omega_k \in \Xi_{\delta_k, \varepsilon_k}(\mathfrak{B}_r)$ for $\delta_k, \varepsilon_k =\text{o}(1)$ as $k \to \infty$. For this  reason, by Theorem \ref{Mainthm} there are two balls such that
$$
  \mathfrak{B}_{\frac{r}{\delta_k r+1}} \subset \Omega_k \subset \mathfrak{B}_{\frac{r+\varepsilon_k r}{1-\varepsilon_k r}}.
$$
Now, using that all these balls are centered at points that are bounded (since we assumed that the family $\Omega_k $
is uniformly bounded), we can extract a subsequence such that the centers converge and therefore we conclude that
there is a ball $\mathfrak{B}_r$ such that
$\Omega_k \to \mathfrak{B}_r$ as $k \to \infty$.
\end{proof}

\begin{proof}[Proof of Theorem \ref{teo.autofunc.intro}] 
The proof follows by contradiction. Let us suppose that there exists an $\varepsilon_0>0$ such that the thesis of Theorem fails to hold. This means that for each $k \in \mathbb{N}$ we might find a domain $\Omega_k$ and $u_k$, a normalized $\infty-$ground state to \eqref{eq.infty.p} in $\Omega_k$, such  that $\Omega_{k} \in \Xi_{\gamma_k, \zeta_k}(\mathfrak{B}_r)$ with $\gamma_k, \zeta_k = \text{o}(1)$ as $k \to \infty$, that is, 
$$
 |\lam_{1,\infty}^D(\Omega_k) - \lam_{1,\infty}^D(\mathfrak{B}_r)| < \gamma_k \quad \text{and} \quad 
 |\lam_{1,\infty}^N(\Omega_k) - \lam_{1,\infty}^N(\mathfrak{B}_r)| < \zeta_k,
$$
with $\gamma_k, \zeta_k = \text{o}(1)$ as $k \to \infty$,
together with 
\begin{equation}\label{Eqcont}
     |u_k(x) - v_{\infty}(x)|> \varepsilon_0 \quad \,\,\text{in} \,\,\, \Omega_k \cap \mathfrak{B}_r,
\end{equation}
for every $k \in \mathbb{N}$.

Using our previous results, we can suppose that every $\Omega_k \subset \mathfrak{B}_{2r}$. Then, by extending $u_k$ to zero outside of $\Omega_k$, we may assume that $\{u_k\}_{k\in \mathbb{N}} \subset W_0^{1, \infty}(\mathfrak{B}_{2r})$. In this context,
standard arguments using viscosity theory show that, up to a subsequence, $u_k \to u_{\infty}$ uniformly in
$\overline{\mathfrak{B}_{2r}}$, being the limit $u_\infty$ a normalized eigenfunction for some domain $\hat{\Omega}$ with $\hat{\Omega} \Subset\mathfrak{B}_{2r}$.
Moreover, we have that $\lambda^D_{1,\infty} (\Omega_k) \to \lambda^D_{1,\infty} (\hat{\Omega})$.

According to Theorem \ref{Mainthm2}, $\Omega_k \to \mathfrak{B}_r$ as $k \to \infty$. By the previous sentences we conclude that $\hat{\Omega} = \mathfrak{B}_r$. Now, by uniqueness of solutions to \eqref{eq.infty.p} in $\mathfrak{B}_r$ we conclude that $u_{\infty} = v_{\infty}$. However, this contradicts \eqref{Eqcont} for $k \gg 1$ (large enough). Such a contradiction proves the theorem.
\end{proof}

\section{Examples}\label{sect-examples}

Given a fixed ball $\mathfrak{B}$ and a domain $\Omega$ having  both of them the same volume, Theorem \ref{Mainthm} says that if the $\infty-$eigenvalues are close each other then $\Omega$ is almost ball-shaped uniformly.
The following examples illustrate Theorem \ref{Mainthm} and \ref{Mainthm2}.

\begin{example}
The reciprocal in Theorem \ref{Mainthm} (and Theorem  \ref{Mainthm2}) is not true: given a fixed ball $\mathfrak{B}$, clearly, there are domains $\Omega$ fulfilling \eqref{ecccc} such that the difference between the Neumann (and Dirichlet) eigenvalues in $\Omega$ and in $\mathfrak{B}$ is not small. Let us present some illustrative examples.

\begin{enumerate}
  \item A stadium. Let  $\mathfrak{B}$ be the unit ball in $\R^2$ and $\Omega$ the stadium domain given in Figure \ref{fi1} (a) with $\ell=\frac{\pi(1-\ve^2)}{2\ve}$. In this case $\Leb(\mathfrak{B})=\Leb(\Omega)=\pi$ for any $0<\ve<1$. However,
$$
\lam_{1,\infty}^N(\mathfrak{B})=1, \qquad \lam_{1,\infty}^N(\Omega)=\frac{2}{\text{diam}(\Omega)}= \frac{4\ve}{\pi+\ve^2(4-\pi)}<\frac13 \quad \text{if } \ve<\frac14.
$$

\item A ball with holes. If $\Omega=B(0,\sqrt{1+\ve^2}) \setminus B(0,\ve)$ is the domain given in Figure \ref{fi1} (b), then $\Leb(\mathfrak{B})=\Leb(\Omega)=\pi$, however
$$
\lam_{1,\infty}^D(\mathfrak{B})=1, \qquad \lam_{1,\infty}^D(\Omega)=\frac{1}{\sqrt{1+\ve^2}}>\frac32 \quad \text{if } \frac34<\ve<1.
$$

\item A ball with thin tubular branches. If  $\Omega$ is the domain given in Figure \ref{fi1} (c), the condition $\Leb(\mathfrak{B})= \Leb(\Omega)$ gives the relation
$$
	r(r+\ve) + \ve(\tfrac{1}{\pi}+\tfrac{\ve}{2})=1, \qquad \text{diam}(\Omega)=1+r+\pi(1+r).
$$
For instance, if we take $\ve=10^{-3}$ it follows that $r\sim 0.999465$ and then
$$
\lam_{1,\infty}^N(\mathfrak{B})=\frac{2}{\text{diam}(\mathfrak{B})}=1, \qquad \lam_{1,\infty}^N(\Omega)=\frac{2}{\text{diam}(\Omega)}\sim 0.2415.
$$
\begin{figure}[ht]\label{fi1}
\begin{center}
\includegraphics[width=12cm,height=4.7cm]{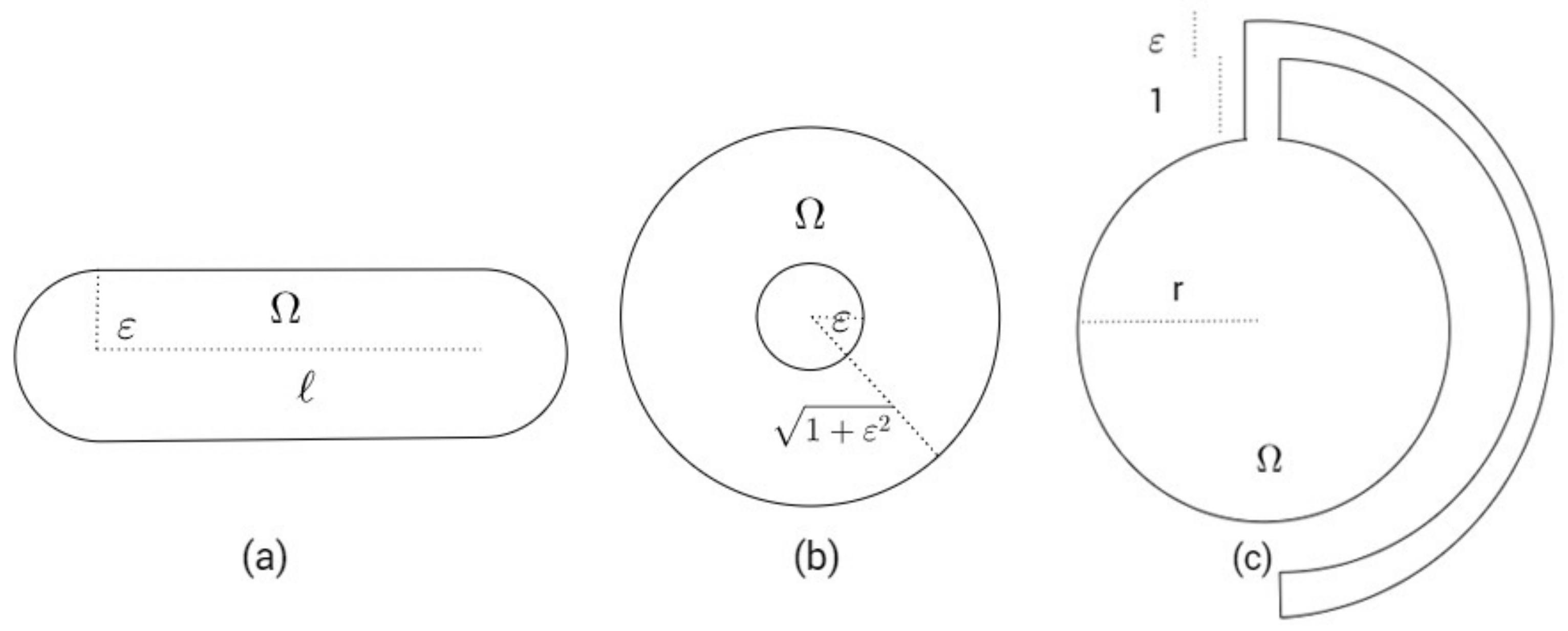}
\caption{Three examples of domains}
\label{dib3}
\end{center}
\end{figure}
\end{enumerate}

Hence, in view of these examples we conclude that a domain that has Dirichlet and Neumann $\infty-$eigenvalues
close to the ones for the ball is close to a ball not only in the sense that
$\Leb\left(\Omega\bigtriangleup \mathfrak{B}_{r}\right)$ is small but it
can not contain holes deep inside (small holes near the boundary are allowed) and can not
have thin tubular branches.
\end{example}

\begin{example} The regular polygon $\mathbb{P}_k$ of $k-$sides ($k\geq 3$) centered at the origin such that $\Leb(\mathbb{P}_k) = \Leb(\mathfrak{B}_r)$ satisfies
$$
  \displaystyle | \lam_{1,\infty}^D(\mathbb{P}_k) - \lam_{1,\infty}^D(B_r)| = \delta_1\quad \text{and} \quad |\lam_{1,\infty}^N(B_r) - \lam_{1,\infty}^N(\mathbb{P}_k)| = \delta_2,
$$
where
$$
	\delta_1=\frac{1}{r\sqrt{\frac{\pi}{k\tan(\frac{\pi}{k})}}}- \frac{1}{r} \quad \text{ and} \quad \delta_2 = \frac{1}{r} - \frac{1}{r\sqrt{\frac{2\pi}{k\sin(\frac{2\pi}{k})}}}.
$$
Therefore, we can recover the well known convergence $\mathbb{P}_k \to \mathfrak{B}_r$ as $k \to \infty$.
\end{example}

\begin{example} Given $k \in \mathbb{N}$ and positive constants $\mathfrak{a}^k_1,\cdots, \mathfrak{a}^k_n$, the $n-$dimensional ellipsoid given by $$\displaystyle \mathcal{E}_k \defeq \left\{(x_1, \cdots, x_n) \suchthat \sum_{i=1}^{n} \Big(\frac{x_i}{\mathfrak{a}^k_i}\Big)^2<1\right\}$$ such that $\Leb(\mathcal{E}_k) = \Leb(\mathfrak{B}_r)$ satisfies
$$
  \displaystyle | \lam_{1,\infty}^D(\mathcal{E}_k) - \lam_{1,\infty}^D(B_r)| =\delta_1 \quad \text{and} \quad |\lam_{1,\infty}^N(B_r) - \lam_{1,\infty}^N(\mathcal{E}_k)| = \delta_2,
$$
where
$$
	\delta_1  = \frac{1}{\min\limits_{i}\{\mathfrak{a}^k_i\}} - \frac{1}{r}, \quad \text{and } \quad \delta_2=\frac{1}{r} - \frac{1}{\max\limits_{i}\{\mathfrak{a}^k_i\}}.
$$
Therefore, we recover the fact that if $\min\limits_{i} \mathfrak{a}^k_i \to r$ and $\max\limits_{i} \mathfrak{a}^k_i \to r$ as $k \to \infty$, then $\mathcal{E}_k \to \mathfrak{B}_r$.
\end{example}

\begin{example} Given $r>0$ let $k_0 \in \mathbb{N}$ such that $\frac{1}{2\pi} \sqrt{\frac{4}{k^2} + 4 \pi^2r^2}> \frac{1}{k \pi}$ for all $k \geq k_0$. For each $k\in \mathbb{N}$ let $\Omega_k$ be the planar stadium domain from Figure 1 (a) with $l_k = \frac{1}{k}$ and $\varepsilon_k = \frac{1}{2\pi} \sqrt{\frac{4}{k^2} + 4 \pi^2r^2}-\frac{1}{k \pi}$. It is easy to check that $\Omega_k \in \Xi_{\frac{1}{\varepsilon_k}-\frac{1}{r}, \frac{2}{2\varepsilon_k + \frac{1}{k}}-\frac{1}{r}}(\mathfrak{B}_r)$.  Furthermore, in  this case we have that the eigenfunctions are explicit and given by
$$
   u_k(x) = \frac{1}{\varepsilon_k}\dist(x, \partial \Omega_k).
$$
Finally, form Corollary \ref{CorConv}
$$
  u_k(x) \to v_{\infty}(x) = \frac{1}{r}\dist(x, \partial \mathfrak{B}_r) \quad \text{locally uniformly} \quad \text{in} \quad \mathfrak{B}_r \quad \text{as}\quad k \to \infty.
$$
\end{example}

\subsubsection*{Acknowledgments}
This work was supported by Consejo Nacional de Investigaciones Cien-t\'{i}ficas y T\'{e}cnicas (CONICET-Argentina). JVS would like to thank the Dept. of Math. and FCEyN Universidad de Buenos Aires for providing an excellent working environment and scientific atmosphere during his Postdoctoral program.


\begin{thebibliography}{99}

\bibitem{Bhat}
T. Bhattacharia, \textit{A proof of the Faber-Krahn inequality for the first eigenvalue
of the $p-$Laplacian}, Ann. Mat. Pura Appl. Ser. 4 177 (1999) 225--240.


\bibitem{Bp} L. Brasco and A. Pratelli, {\it Sharp stability of some spectral inequalities}. Geom. Funct. Anal. 22 (2012), no. 1, 107--135.

  \bibitem{BKJ}
M. Belloni, B. Kawohl and P. Juutinen. {\it The $p$-Laplace eigenvalue problem as $p \to \infty$ in a Finsler metric}. J. Eur. Math. Soc. 8 (2006), 123--138.

\bibitem{Cianci} A. Cianchi, N. Fusco, F. Maggi and A. Pratelli, {\it The sharp Sobolev inequality in quantitative form}.
J. Eur. Math. Soc. (JEMS) 11 (2009), no. 5, 1105--1139.


\bibitem{HSY}
R. Hynd, C. K. Smart and Y. Yu, {\it Nonuniqueness of infinity ground states}. Calc. Var. Partial Differential Equations (2013) 48, 545--554.\label{HSY}


\bibitem{CIL}
M.~G. Crandall, H. Ishii, and P.-L. Lions, \textit{User's guide
  to viscosity solutions of second order partial differential equations}, Bull.
  Amer. Math. Soc. (N.S.) {27} (1992), no.~1, 1--67.

%


\bibitem{EKNT}
L. Esposito, B. Kawohl, C. Nitsch and C. Trombetti, \textit{The Neumann eigenvalue
problem for the $\infty$-Laplacian}, Rend. Lincei Mat. Appl. 26 (2015), 119--134.\label{EKNT}

\bibitem{ENT}
L. Esposito, C. Nitsch and C. Trombetti, \textit{Best constants in Poincar\'{e} inequalities for convex
domains}, J. Conv. Anal. 20 (2013) 253--264.

\bibitem{Fusco} N. Fusco. {\it The quantitative isoperimetric inequality and related topics}. Bull. Math. Sci. (2015) 5, 517--607

\bibitem{FMP2} N. Fusco, F. Maggi, A. Pratelli, {\it Stability estimates for certain Faber--Krahn,
isocapacitary and Cheeger inequalities}, Ann. Sc. Norm. Super. Pisa Cl. Sci. 8 (2009),
51--71.

\bibitem{JLM}
P. Juutinen, P. Lindqvist and J. Manfredi \textit{The $\infty$-eigenvalue problem}.
Arch. Ration. Mech. Anal. 148 (1999), no. 2, 89--105.\label{JLM}

\bibitem{KH}
B. Kawohl and J. Horak, \textit{On the geometry of the $p-$Laplacian operator}.
Discr. Cont. Dynamical Syst., 10, no 4, 2017,  799--813.


\bibitem{Lindq90}
P. Lindqvist, \textit{On the equation $\div(|\nabla u|^{p-2}\nabla u) + \lambda |u|^{p-2}u$}, Proc. AMS, 109
(1990), 157-164.\label{Lindq90}

\bibitem{Lindq92}
P. Lindqvist, \textit{A note on the nonlinear Rayleigh quotient}, in: Analysis, Algebra
and computers in mathematical research (Lulea 1992), Eds.: M.Gyllenberg $\&$
L.E.Persson Marcel Dekker Lecture Notes in Pure and Appl. Math. 156 (1994) 223--231

\bibitem{NRSanAS}
J. C. Navarro, J.D. Rossi, A. San Antolin, and N. Saintier, \textit{The dependence of the first eigenvalue of the infinity Laplacian with respect to the domain}, Glasg. Math. J. {56} (2014), no. 2, 241--249.

\bibitem{RosSaint}
J. D. Rossi and N.Saintier, \textit{On the first nontrivial eigenvalue of the $\infty$-Laplacian with Neumann boundary conditions}. Houston Journal of Mathematics.  42(2), 613--635, (2016).\label{RosSaint}

\bibitem{Valt}
D. Valtorta, \textit{Sharp estimate on the first eigenvalue of the $p-$Laplacian}, Nonlinear Anal.
75 (2012), 4974--4994.

\bibitem{Yu}
Y. Yu, \textit{Some properties of the ground states of the infinity Laplacian}, Indiana Univ. Math. J. {56} (2007), 947--964.


\end{thebibliography}
\end{document}